\newcommand{\divprop}{\lfloor}
\newcommand{\propdiv}{\lfloor}
\newtheorem{lemma}{Lemma}
\newtheorem{theorem}{Theorem}
\newtheorem{corollary}{Corollary}
\newtheorem{definition}{Definition}
\newtheorem{question}{Question}
\newtheorem{conjecture}{Conjecture}
\newlength\tmp
\begin{document}

\frenchspacing

\title{Recursively abundant and recursively perfect numbers}

\author{Thomas Fink}

\date{\today}




\maketitle

\vspace{-0.2in}
\begin{center}
\begin{small}
London Institute for Mathematical Sciences, 35a South St, London W1K 2XF, UK \\
Centre National de la Recherche Scientifique, Paris, France
\begin{minipage}{0.87\textwidth}
\vspace{0.2in}
\noindent
The divisor function $\sigma(n)$ sums the divisors of $n$.
We call $n$ abundant when $\sigma(n) - n > n$ and perfect when $\sigma(n) - n = n$.
I recently introduced the recursive divisor function $a(n)$, the recursive analog of the divisor function.
It measures the extent to which a number is highly divisible into parts, such that the parts are highly divisible into subparts, so on.
Just as the divisor function motivates the abundant and perfect numbers, the recursive divisor function motivates their recursive analogs, which I introduce here.
A number is recursively abundant, or ample, if $a(n) > n$ and recursively perfect, or pristine, if $a(n) = n$.
There are striking parallels between abundant and perfect numbers and their recursive counterparts.
The product of two ample numbers is ample, and 
ample numbers are either abundant or odd perfect numbers.
Odd ample numbers exist but are rare, and I conjecture that there are such numbers not divisible by the first $k$ primes---which is known to be true for the abundant numbers.
There are infinitely many pristine numbers, but that they cannot be odd, apart from 1.
Pristine numbers are the product of a power of two and odd prime solutions to certain Diophantine equations, 
reminiscent of how perfect numbers are the product of a power of two and a Mersenne prime.
The parallels between these kinds of numbers hint at deeper links between the divisor function and its recursive analog, worthy of further investigation.
\end{minipage}
\end{small}
\end{center}

\vspace{0.3in}

\section{Introduction} 
\subsection{Parts into parts into parts}
A man wants to leave a pot of gold coins to his future children.
The coins are to be equally split among his children, even though he doesn't yet know how many children he will have.
The coins cannot be subdivided.
How many coins should the man leave?
On the one hand, he wants a number with many divisors, so that each of his children can get the same share, however many children he has.
On the other hand, he doesn't want a number so big that that he can't afford the coins. 
Suitable numbers for this problem are the highly composite numbers \cite{RamanujanA}, which have more divisors than any number preceding them.
\\ \indent
But now suppose that, in addition, the man wants each of his children to be able to split their share equally among their own children.
In this case the number of coins should be highly divisible, but also the parts into which it is divided should be highly divisible, too.
\\ \indent
This process can be extended down to great-grandchildren, and so on. 
I call this property---the parts are divisible into subparts which are divisible into sub-subparts, and so on---recursive divisibility.
To identify numbers that are recursively divisible to a high degree, I recently introduced and studied the recursive divisor function---the recursive analog of the usual divisor function \cite{Fink}.
I showed that the number of recursive divisors is twice the number of ordered factorizations into integers greater than one \cite{Fink}.
This latter problem has been well-studied in its own right by Kalmar, Hille, Erd\"{o}s, Chor, Klazara and Deleglise
\cite{Kalmar, Hille, Erdos, Chor, Klazara, Deleglise}.
\subsection{Modular design}
My original inspiration for introducing the recursive divisor function was, surprisingly, a problem in graphic design concerning the London Institute website.
In graphic design, a grid system is often used to divide the page into equal primitive parts \cite{Brockmann}.
These parts form the smallest unit from which larger parts can be composed, similar to how Lego bricks must be multiples of the minimal $1 \times 1$ brick.
The two dimensions can be treated separately, so consider just the width.
Into how many primitive columns should the page be divided?
This number, the grid size, should provide many options for dividing the page into equal columns, while being as small as possible to encourage simplicity and consistency.
\\ \indent
But now suppose that the columns themselves will be broken into sub-columns, and the sub-columns into sub-sub-columns, and so on.
This is a familiar task in traditional print media, such as newspapers. 
But with the advent of digital design, hierarchical modularity is becoming the rule, not the exception.
By computing those numbers which are more recursively divisible than all of their predecessors, 
we recover many of the grid sizes commonly used in design and technology, such as website grids and digital display resolutions \cite{Fink}.
Up to 1000, these are 1, 2, 4, 6, 8, 12, 24, 36, 48, 72, 96, 120, 144, 192, 244, 288, 360, 480, 576, 720, 864 and 960.
On the basis of this, the website of the London Institute uses a grid of 96 primitive columns.
\subsection{Outline of paper} 
Including this introduction, this paper is divided into four parts.
In part 2, I first review the recursive divisor function introduced previously, which has a geometric interpretation in the form of divisor trees \cite{Fink}.
I then show that the number of recursive divisors $a(n)$ is at least multiplicative: $a(l n) \geq a(l) \, a(n)$.
Just as the divisor function gives rise to the abundant, perfect and deficient numbers,
the recursive divisor function gives rise to their recursive counterparts.
I introduce ample numbers, for which $a(n) > n$;
pristine numbers, for which $a(n) = n$; and
depleted numbers, for which $a(n) < n$.
For each kind of number, an example divisor tree is shown in Figure \ref{threetypesplot}.
\\ \indent
In part 3 I investigate ample numbers, the recursive analog of abundant numbers.
Ample and abundant numbers have some curious parallel properties.
I show that the product of two ample numbers is ample, whereas any multiple of an abundant number is abundant.
Ample numbers, which are rarer than abundant numbers, are either abundant or odd perfect numbers (if they exist).
The first 100 are shown in Table 2.
The first $10^8$ ample numbers are even but, to my surprise, odd ample numbers exist. 
This is analogous to the abundant numbers, where the first odd abundant number is preceded by many even ones.
I conjecture that there exist ample numbers not divisible by the first $k$ primes, which is known to be true for abundant numbers \cite{Iannucci}.
I give the smallest such ample numbers for $k=1$ and $k=2$, which are approximately $10^{12}$ and $10^{87}$.
\begin{figure}[b!]
\raggedright
\setlength\tmp{1.2\textwidth}
			\includegraphics[width=\textwidth]{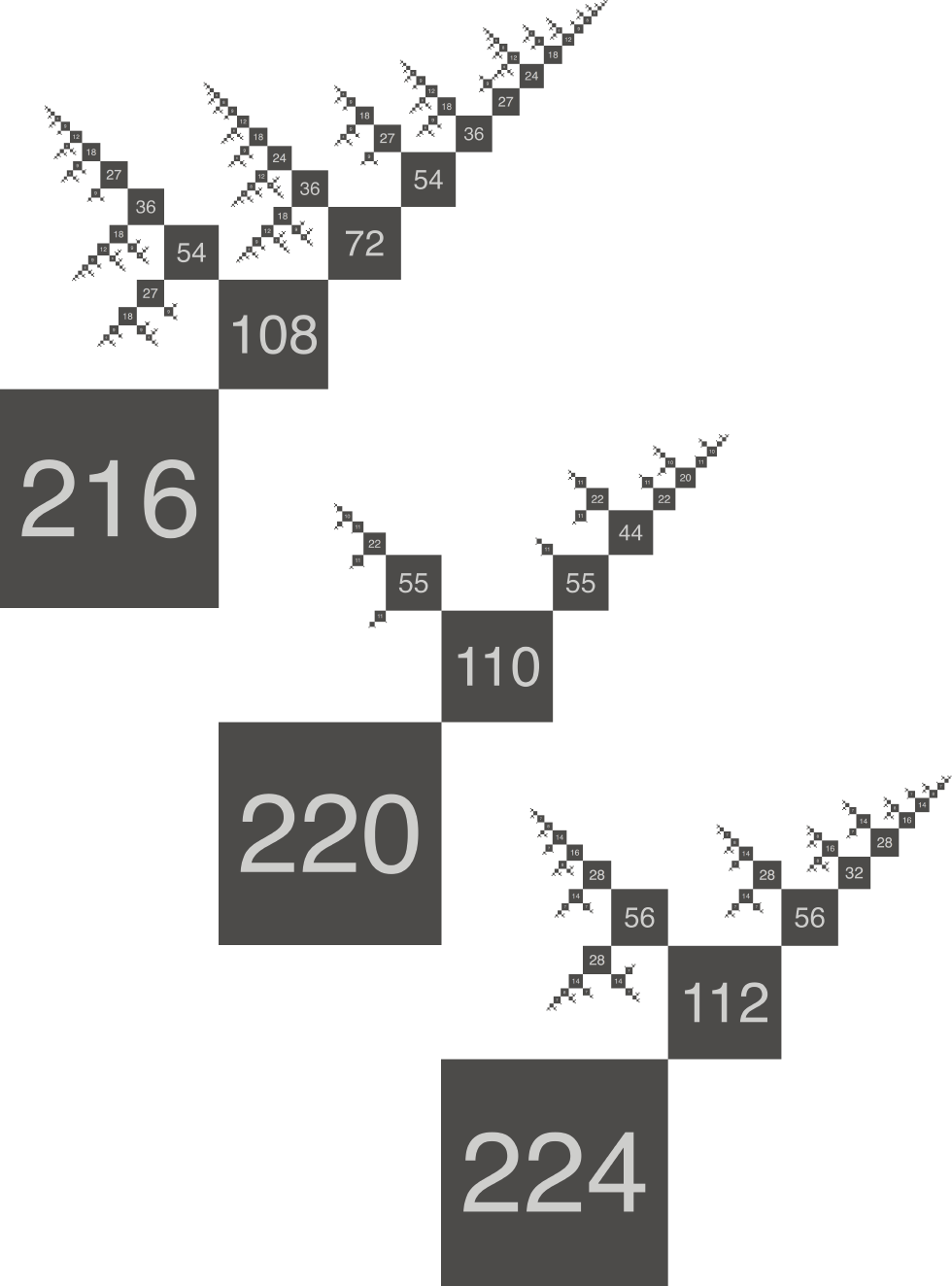} 
\caption{\small
Divisor trees for 216, 220 and 224. 
The number of recursive divisors $a(n)$ counts the number of squares in each tree.
For example, the proper divisors of 216 are 108, 72, 54 and so on, which form the main arm of its tree.
The proper divisors of 108 are 54, 36, 27 and so on, which form the first sub-arm of the tree.
From top, these numbers are examples of 
ample 	numbers ($a(n)>n$),
depleted 	numbers ($a(n)<n$) and 
pristine 	numbers ($a(n)=n$).
The number 216 is ample because $a(216) = 504 >216$;
220 is depleted because $a(220) = 88 < 220$; and
224 is pristine because $a(224) = 224$.
While 224 is the 11th pristine number, there are only 100 less than $8 \times 10^{18}$.
Divisor trees can be generated for any number $n$ at lims.ac.uk/recursively-divisible-numbers.
}
\vspace{-14pt}
\label{threetypesplot}
\end{figure}
\clearpage
%
In part 4 I investigate pristine numbers, the recursive analog of perfect numbers.
Pristine and perfect numbers also possess some parallel properties.
It is thought unlikely that odd perfect numbers exist, and I show that there can be no odd pristine numbers, apart from 1. 
Pristine numbers take the form of a product of a power of 2 and odd prime solutions to certain Diophantine equations,
whereas perfect numbers are the product of a power of two and a Mersenne prime.
All numbers of the form $2^c$ and $2^{q-2} q$ are pristine, where $q$ is an odd prime.
However, no numbers of the form $2^c q^3$ or $2^c q^5$ exist, and there can be at most $d-1$ pristine numbers of the form for $2^c q^d$ for $d$ odd.
Pristine numbers are never perfect apart from 6.
The first 100 pristine numbers are given in Table 2, the largest of which is approximately $10^{19}$.
Additionally, I compute the first several pristine numbers of the form
$2^c q \, r$, 		
$2^c q^2$, 		
$2^c q \, r \, s$ and 		
$2^c q^2 r$,
shown in Table 3.
\\ \indent
I conclude with a list of open problems.
\section{Number of recursive divisors}
\noindent
Throughout this paper I write $m|n$ to indicate $m$ divides $n$ and $m\divprop n$ to indicate $m$ is a proper divisor of $n$.
\\ \indent 
The usual divisor function,
\begin{equation*}
\sigma_x(n) = \sum_{m|n} m^x,
\end{equation*}
sums the divisors of $n$ raised to some integer power $x$.
When $x=1$, the divisor function sums the divisors of $n$ and is generally written $\sigma(n)$.
In this paper, however, I am concerned not only with the proper divisors of $n$ but also the proper divisors of its proper divisors, 
the proper divisors of the proper divisors of its proper divisors, and so on.
Recently I introduced and studied the recursive divisor function \cite{Fink},
\begin{equation*}
\kappa_x (n) = n^x + \sum_{m\divprop n} \kappa_x(m).
\end{equation*}
When $x=0$, I call this the number of recursive divisors $a(n)$.
\begin{definition}
	The number of recursive divisors is $a(1)=1$ and
	\begin{equation*}
		a(n) = 1 + \sum_{m\divprop n}a(m),
	\end{equation*}
	where $m\divprop n$ means $m$ is a proper divisor of $n$.
	\label{adef}
\end{definition}
\noindent
For example, $a(10) = 1 + a(1) + a(2) + a(5) = 6$.
Note that $a(n)$ depends only on the set of exponents in the prime factorization of $n$ and not on the primes themselves.
\subsection{Divisor trees}
The number of recursive divisors $a(n)$ has a geometric interpretation: it is the number of squares in the divisor tree of $n$.
Fig. \ref{threetypesplot} shows divisor trees for 216, 220 and 224.
By contrast, $\sigma(n)$ adds up the side lengths of the squares in the main diagonal of the trees.
Divisor trees can be generated for any number $n$ at lims.ac.uk/recursively-divisible-numbers.
\\ \indent
A divisor tree is constructed as follows.
First, draw a square of side length $n$.
Let $m_1, m_2, \ldots$ be the proper divisors of $n$ in descending order. 
Then draw squares of side length $m_1$, $m_2, \ldots$ with each consecutive square situated to the upper right of its predecessor.
This forms the main arm of a divisor tree.
Now, for each of the squares of side length $m_1, m_2, \ldots$, repeat the process.
Let $l_1, l_2, \ldots$ be the proper divisors of $m_1$ in descending order. 
Then draw squares of side length $l_1$, $l_2, \ldots$, but with the sub-arm rotated $90^{\circ}$ counter-clockwise.
Do the same for each of the remaining squares in the main arm. 
This forms the branches off of the main arm.
Continue this process, drawing arms off of arms off of arms, and so on, until the arms are single squares of size 1.
\\ \indent
The number of recursive divisors can be written in closed from for one, two and three primes to powers, as shown in \cite{Fink,Hille,Chor}.
Let $p, q$ and $r$ be prime. Then
\begin{eqnarray}
	a(p^c) 			&=& 		2^c 													,	\label{aexpc}	\\
	a(p^c q^d) 		&=& 		2^c \sum_{i=0}^d \binom{d}{i} \binom{c+i}{i}					,	\label{aexpcd}	\\
	a(p^c q^d r^e) 		&=& 		\sum_{j=0}^d (-1)^j \binom{d}{j} \binom{c+d-j}{d} a(p^{c+d-j} r^e)	.	\label{aexpcde}
\end{eqnarray}
\begin{theorem}
	For any two integers $l$ and $n$, $a(l n) \geq a(l) \, a(n)$.
	\label{aproductrule}
\end{theorem}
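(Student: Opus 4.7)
The proof hinges on the description of $a(n)$ recalled in the introduction: for $n\geq 2$, $a(n)$ equals twice the number of ordered factorizations of $n$ into integers greater than one. Writing $H(n)$ for this count, with $H(1)=1$ for the empty factorization, one has $a(n)=2H(n)$ for $n\geq 2$ while $a(1)=1$. The cases $l=1$ or $n=1$ reduce to $a(n)\geq a(n)$, so I may assume $l,n\geq 2$, whereupon the inequality $a(ln)\geq a(l)a(n)$ becomes
\[
H(ln) \;\geq\; 2\,H(l)\,H(n).
\]

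The plan is to prove this sharper statement by exhibiting an injection $\mathcal{F}(l)\times\mathcal{F}(n)\times\{0,1\}\hookrightarrow\mathcal{F}(ln)$, where $\mathcal{F}(m)$ denotes the set of ordered factorizations of $m$ into parts $\geq 2$. Given $F=(f_1,\dots,f_i)\in\mathcal{F}(l)$ and $G=(g_1,\dots,g_j)\in\mathcal{F}(n)$, I would send $(F,G,0)$ to the concatenation
\[
F\cdot G \;=\; (f_1,\dots,f_i,g_1,\dots,g_j),
\]
and $(F,G,1)$ to the merge $F\star G = (f_1,\dots,f_{i-1},\,f_i g_1,\,g_2,\dots,g_j)$, in which the last part of $F$ is absorbed into the first part of $G$. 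Since $l,n\geq 2$, both $F$ and $G$ are nonempty, so the merge is well-defined, and both outputs lie in $\mathcal{F}(ln)$ (noting $f_i g_1\geq 4$).

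The main task is to verify injectivity. Given any output $T=(t_1,\dots,t_m)$, consider its prefix products $P_k = t_1\cdots t_k$. Since each $t_k\geq 2$, the sequence $1=P_0<P_1<\dots<P_m$ is strictly increasing. If $T=F\cdot G$, then $l=P_{|F|}$ appears among these prefix products, and strict monotonicity pins down $|F|$, hence $F$ and $G$, uniquely. If instead $T=F\star G$, then $P_{i-1}=l/f_i < l < l g_1 = P_i$, so $l$ lies strictly between two consecutive prefix products; the splitting index $i$ is therefore forced, and then $f_i=l/P_{i-1}$ and $g_1=t_i/f_i$ recover $(F,G)$. The dichotomy ``$l$ is a prefix product of $T$'' versus ``$l$ is not'' is mutually exclusive, so the concatenation and merge images are disjoint, and injectivity follows.

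With the injection in hand, $H(ln)\geq 2H(l)H(n)$ is immediate, whence $a(ln)=2H(ln)\geq 4H(l)H(n)=a(l)a(n)$. The crucial conceptual step is recognizing that concatenation alone yields only $H(ln)\geq H(l)H(n)$, which is weaker than needed by exactly a factor of two; the merge supplies a second, disjoint copy of $\mathcal{F}(l)\times\mathcal{F}(n)$ inside $\mathcal{F}(ln)$, furnishing the missing factor.
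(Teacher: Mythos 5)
Your proof is correct, but it takes a genuinely different route from the paper. The paper argues directly from the recursive definition by induction on $l$: among the proper divisors of $ln$ it singles out the disjoint families $\{t n : t \propto l\}$ and $\{m : m \propto n\}$, applies the inductive hypothesis $a(tn) \geq a(t)\,a(n)$ to the first family, and recognizes $1 + \sum_{m \propto n} a(m)$ as $a(n)$, so the whole argument is self-contained in two displayed lines. You instead invoke the identity $a(n) = 2H(n)$ (for $n \geq 2$) from the cited companion paper and reduce the claim to the combinatorial inequality $H(ln) \geq 2H(l)H(n)$, which you establish by exhibiting two disjoint embeddings of $\mathcal{F}(l)\times\mathcal{F}(n)$ into $\mathcal{F}(ln)$ --- concatenation and merge --- distinguished by whether $l$ occurs as a prefix product; your injectivity argument via strictly increasing prefix products is airtight, and the factor of $2$ supplied by the merge is exactly what the normalization $a = 2H$ demands. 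What your approach buys is a transparent combinatorial explanation of \emph{why} the inequality holds (and where the slack is: ordered factorizations of $ln$ that interleave parts of $l$ and $n$ nontrivially are not in the image); what it costs is reliance on an external identity not proved in this paper, whereas the paper's induction needs only Definition \ref{adef}.
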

\emph{Proof.}
The proof is by induction on $l$.
First note that $a(l n) \geq a(l) \, a(n)$ for $l=1$ and all $n$.
Assume $a(k n) \geq a(k) \, a(n)$ for all $k < l$ and all $n$.
From Definition \ref{adef},
\begin{equation*}
a(l n) = 1 + \!\! \sum_{m\propdiv l n} \!\! a(m).
\end{equation*}
Let $t_1, t_2,\ldots, t_j$ be the proper divisors of $l$.
Then
\begin{eqnarray*}
a(l n) 	&\geq&	1 + a(t_1 n) + a(t_2 n) + \ldots + a(t_j n) + \sum_{m\propdiv n} a(m) 		\\
		&\geq&	a(t_1) \, a(n) + a(t_2) \, a(n) + \ldots + a(t_j) \, a(n) + a(n) 				\\
		&=&		\Big(1 + a(t_1) + a(t_2) + \ldots + a(t_j) \Big) \, a(n) 					\\
		&=&		\Bigg(1 + \sum_{m\propdiv l} a(m) \Bigg)	a(n) 					\\
		&=&		a(l) \, a(n),															
\end{eqnarray*}
completing the inductive step. \qed	
\section{Recursively abundant numbers} 
\noindent
In this section I review abundant numbers and introduce recursively abundant numbers, which I call ample numbers.
\subsection{Abundant numbers} 
A number is abundant if the sum of its proper divisors exceeds it, that is,
\begin{equation*}
\sigma(n) - n = \sum_{m \divprop n} m > n.
\end{equation*}
A number is deficient if the sum of its proper divisors is less than it, that is, $\sigma(n) - n < n$.
The first several abundant numbers are 12, 18, 20, 24, 30, 36, 40, 42.
All multiples of abundant numbers are abundant, so abundant numbers are not rare: their natural density is between 0.2474 and 0.2480 \cite{DelegliseB}.
\\ \indent
While the first 231 abundant numbers are even, the 232nd is odd: it is not divisible by the first prime.
In fact, there exist abundant numbers not divisible by the first $k$ primes, for all $k$ \cite{Iannucci}.
The smallest such numbers for the first few $k$ (OEIS A047802 \cite{Sloane}) are
\begin{eqnarray*}
945				&=&		3^3 \cdot 5 \cdot 7, 								 \\
5391411025 		&=&		5^2 \cdot 7 \cdot 11 \cdot \ldots \cdot 29, 				 \\
2.0 \times 10^{25}	&=&		7^2 \cdot 11^2 \cdot 13 \cdot 17 \cdot \ldots \cdot 67.
\end{eqnarray*}
\subsection{Ample numbers} 
Recursively abundant numbers are the recursive analog of abundant numbers.
I call them ample numbers.
\begin{definition}
A number $n$ is ample if $a(n) > n$ and depleted if $a(n) < n$, where $a(n)$ is the number of recursive divisors.
\end{definition}
\noindent
For example, 12 is ample because $a(12) = 16 > 12$, but 14 is depleted because $a(14) = 6 < 14$.
The first several ample numbers are 12, 24, 36, 48, 60, 72, 80, 84, and the first 100 are shown in Table 1.
Unlike abundant numbers, ample numbers become scarcer with  $n$, with the density apparently vanishing (Figure 2).
\\ \indent
One of the original motivations for studying $a(n)$ was identifying numbers which are recursively divisible to a high degree.
The record holders are the recursively highly composite numbers, which have more recursive divisors than all of their predecessors \cite{Fink}.
A less stringent benchmark is being ample; in terms of being recursively divisible, they just pass muster.
\begin{corollary}
The product of two ample numbers is ample.
\end{corollary}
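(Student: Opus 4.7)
The plan is to derive this as an immediate consequence of Theorem \ref{aproductrule}, which already does the hard work of showing that $a$ is super-multiplicative.

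First I would unpack the definitions: if $l$ and $n$ are ample, then by definition $a(l) > l$ and $a(n) > n$. Next I would apply Theorem \ref{aproductrule} to get $a(l n) \geq a(l)\, a(n)$. Combining these two facts by multiplying the strict inequalities $a(l) > l$ and $a(n) > n$ (both sides positive, so the product inequality $a(l) a(n) > l n$ is preserved) would give
\begin{equation*}
a(l n) \;\geq\; a(l)\, a(n) \;>\; l \, n,
\end{equation*}
which is exactly the ample condition for $l n$.

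There is no real obstacle here — the only point to be careful about is that $a(l), a(n), l, n$ are all positive integers, so multiplying the strict inequalities is legitimate. All the substantive work has been done in Theorem \ref{aproductrule}; the corollary is essentially a restatement of super-multiplicativity in the ample regime. For this reason I would keep the write-up to just two or three lines.
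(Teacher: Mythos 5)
Your proposal is correct and follows exactly the same route as the paper: invoke Theorem \ref{aproductrule} to get $a(ln) \geq a(l)\,a(n)$, then multiply the strict inequalities $a(l) > l$ and $a(n) > n$ to conclude $a(ln) > ln$. The only difference is that you explicitly note positivity justifies multiplying the inequalities, which the paper leaves implicit.
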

\emph{Proof.}
This follows immediately from Theorem \ref{aproductrule}, which states $a(l n) \geq a(l) \, a(n)$.
A number is ample if $a(n) > n$. So if $a(l) > l$ and $a(n) > n$, then $a(l n) > l n$. \qed
\begin{table}[b!]
\begin{small}
\[\arraycolsep=14pt
\begin{array}{lllll}
2^2\cdot 3 & 2^2\cdot 3\cdot 5^2 & 2^5\cdot 3\cdot 7 & 2^4\cdot 3\cdot 5^2 & 2^5\cdot 3\cdot 19 \\
 2^3\cdot 3 & 2^6\cdot 5 & 2^4\cdot 3^2\cdot 5 & 2^5\cdot 3\cdot 13 & 2^3\cdot 3\cdot 7\cdot 11 \\
 2^2\cdot 3^2 & 2^2\cdot 3^4 & 2^2\cdot 3^3\cdot 7 & 2^2\cdot 3^2\cdot 5\cdot 7 & 2^4\cdot 3^2\cdot 13 \\
 2^4\cdot 3 & 2^4\cdot 3\cdot 7 & 2^8\cdot 3 & 2^8\cdot 5 & 2\cdot 3^3\cdot 5\cdot 7 \\
 2^2\cdot 3\cdot 5 & 2^3\cdot 3^2\cdot 5 & 2^3\cdot 3^2\cdot 11 & 2^4\cdot 3^4 & 2^7\cdot 3\cdot 5 \\
 2^3\cdot 3^2 & 2^7\cdot 3 & 2^5\cdot 5^2 & 2^3\cdot 3\cdot 5\cdot 11 & 2^3\cdot 3^5 \\
 2^4\cdot 5 & 2^4\cdot 5^2 & 2^3\cdot 3\cdot 5\cdot 7 & 2^6\cdot 3\cdot 7 & 2^2\cdot 3^2\cdot 5\cdot 11 \\
 2^2\cdot 3\cdot 7 & 2^2\cdot 3\cdot 5\cdot 7 & 2^5\cdot 3^3 & 2^5\cdot 3^2\cdot 5 & 2^5\cdot 3^2\cdot 7 \\
 2^5\cdot 3 & 2^4\cdot 3^3 & 2^7\cdot 7 & 2^3\cdot 3^3\cdot 7 & 2^3\cdot 3\cdot 5\cdot 17 \\
 2^2\cdot 3^3 & 2^6\cdot 7 & 2^2\cdot 3^2\cdot 5^2 & 2^9\cdot 3 & 2^2\cdot 3\cdot 5^2\cdot 7 \\
 2^3\cdot 3\cdot 5 & 2^5\cdot 3\cdot 5 & 2^3\cdot 3^2\cdot 13 & 2^3\cdot 3\cdot 5\cdot 13 & 2^6\cdot 3\cdot 11 \\
 2^4\cdot 3^2 & 2^3\cdot 3^2\cdot 7 & 2^6\cdot 3\cdot 5 & 2^4\cdot 3^2\cdot 11 & 2^4\cdot 3^3\cdot 5 \\
 2^5\cdot 5 & 2^4\cdot 3\cdot 11 & 2^2\cdot 3^5 & 2^6\cdot 5^2 & 2^6\cdot 5\cdot 7 \\
 2^3\cdot 3\cdot 7 & 2^2\cdot 3^3\cdot 5 & 2^4\cdot 3^2\cdot 7 & 2^2\cdot 3^4\cdot 5 & 2^2\cdot 3^4\cdot 7 \\
 2^2\cdot 3^2\cdot 5 & 2^4\cdot 5\cdot 7 & 2^5\cdot 3\cdot 11 & 2^5\cdot 3\cdot 17 & 2^8\cdot 3^2 \\
 2^6\cdot 3 & 2^6\cdot 3^2 & 2^3\cdot 3^3\cdot 5 & 2^4\cdot 3\cdot 5\cdot 7 & 2^2\cdot 3^2\cdot 5\cdot 13 \\
 2^3\cdot 3^3 & 2^3\cdot 3\cdot 5^2 & 2^5\cdot 5\cdot 7 & 2^6\cdot 3^3 & 2^4\cdot 3\cdot 7^2 \\
 2^4\cdot 3\cdot 5 & 2^4\cdot 3\cdot 13 & 2^7\cdot 3^2 & 2^5\cdot 5\cdot 11 & 2^3\cdot 3^3\cdot 11 \\
 2^2\cdot 3^2\cdot 7 & 2^7\cdot 5 & 2^3\cdot 3\cdot 7^2 & 2^8\cdot 7 & 2^5\cdot 3\cdot 5^2 \\
 2^5\cdot 3^2 & 2^3\cdot 3^4 & 2^2\cdot 3^3\cdot 11 & 2^3\cdot 3^2\cdot 5^2 & 2^4\cdot 3^2\cdot 17 
\end{array}
\]
\\ \vspace{8pt}
\caption{ 
The first 100 recursively abundant numbers, which I call ample numbers.
A number $n$ is ample if $a(n) > n$, where $a(n)$ is the number of recursive divisors.
A concise Mathematica algorithm for the ample numbers is as follows: 
{\tt 
n = 2;
max = 1000; 
a = \{1\}; 
While[n <= max, a = Append[a, 1 + Total[Part[a, Delete[Divisors[n], -1]]]]; n++];
Select[Range[max], a[[\#]] > \# \&]
}
}
\end{small}
\label{ampletable}
\end{table}
\begin{lemma}
	No deficient numbers are ample.
\end{lemma}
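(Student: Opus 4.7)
The plan is to prove the contrapositive: if $n$ is deficient then $a(n) \le n$. I would proceed by strong induction on $n$, after first establishing an auxiliary monotonicity observation about how the abundancy ratio interacts with divisibility.

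The key observation is that $\sigma(n)/n = \sum_{d \mid n} 1/d$ (rewrite each divisor $d$ of $n$ as $1/(n/d)$ and reindex over $n/d$), so this ratio is strictly monotone along the divisor lattice: whenever $m \divprop n$, every divisor of $m$ is a divisor of $n$, and $n$ itself is a divisor of $n$ but not of $m$, so $\sigma(m)/m < \sigma(n)/n$. Consequently, if some proper divisor $m$ of $n$ satisfies $\sigma(m) \ge 2m$ (i.e., is abundant or perfect), then $\sigma(n)/n > 2$ and $n$ is abundant. Contrapositively, if $n$ is deficient then every proper divisor of $n$ must also be deficient.

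With this in hand, the induction is almost immediate. The base case $n=1$ holds because $a(1) = 1$. For the inductive step, assume the claim for all integers below $n$ and let $n$ be deficient. By the inherited-deficiency observation, every $m \divprop n$ is deficient, so the inductive hypothesis gives $a(m) \le m$. Therefore
\begin{equation*}
a(n) = 1 + \sum_{m \divprop n} a(m) \;\le\; 1 + \sum_{m \divprop n} m \;=\; 1 + \bigl(\sigma(n) - n\bigr) \;\le\; n,
\end{equation*}
where the last inequality uses that $n$ deficient means $\sigma(n) - n < n$, hence $\sigma(n) - n \le n - 1$ because both sides are integers. Thus $n$ is not ample.

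The crux of the argument is really just spotting the right auxiliary claim; once one knows that deficiency is downward-closed under divisibility, nothing subtle about the recursive sum is needed, and the naive bound $a(m) \le m$ on proper divisors is enough. I expect the only place that might require care in a write-up is justifying the strict monotonicity $\sigma(m)/m < \sigma(n)/n$ when $m \divprop n$, but this follows at once from the harmonic-sum identity above.
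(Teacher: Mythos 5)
Your proof is correct and follows essentially the same route as the paper: induction on $n$, the fact that proper divisors of deficient numbers are deficient, and the bound $a(m) \le m$ applied to the recursive sum. The only difference is that you supply a proof (via the identity $\sigma(n)/n = \sum_{d \mid n} 1/d$) of the downward-closure of deficiency, which the paper simply cites as well known.
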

\emph{Proof.}
Let $b(i)$ be the $i$th deficient number.
The proof is by induction on $i$.
First note that no deficient numbers are ample for $i<2$.
Assume no deficient numbers are ample up to but not including the $i$th one.
From Definition \ref{adef},
\begin{equation*}
	a(b(i)) 	=	  	1 + \sum_{m\propdiv b(i)} a(m).	
\end{equation*}
It is well known that all proper divisors of deficient numbers are deficient.
Then since, by assumption, no deficient numbers are ample up to the $i$th one, $a(m) \leq m$, and
\begin{equation*}
 	a(b(i)) 	\leq	  	1 + \sum_{m\propdiv b(i)} m.		
\end{equation*}
Since $b(i)$ is deficient, the sum of its proper divisors above is less than $b(i)$, and so
\begin{equation*}
	a(b(i))	\leq	  b(i),
\end{equation*}
that is, the $i$th deficient number is not ample.
This completes the inductive step.		\qed
\begin{lemma}
	No even perfect numbers are ample.
\end{lemma}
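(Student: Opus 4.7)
The plan is to combine the Euclid–Euler classification of even perfect numbers with the closed form (\ref{aexpcd}) for $a(p^c q^d)$, after which the claim collapses to a one-variable inequality.

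First I would invoke Euclid–Euler: every even perfect number has the form $n = 2^{p-1}(2^p - 1)$ where $q := 2^p - 1$ is a Mersenne prime, so in particular $p$ is itself prime and $p \geq 2$. Since $a(n)$ depends only on the exponents in the prime factorization of $n$, I can then apply (\ref{aexpcd}) with $c = p-1$ and $d = 1$. The sum over $i \in \{0,1\}$ collapses to $\binom{1}{0}\binom{p-1}{0} + \binom{1}{1}\binom{p}{1} = 1 + p$, giving
\begin{equation*}
a(n) = 2^{p-1}(p+1).
\end{equation*}

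The remaining step is to compare this with $n = 2^{p-1}(2^p - 1)$. Cancelling the common factor $2^{p-1}$, the inequality $a(n) \leq n$ is equivalent to $p + 2 \leq 2^p$, which holds with equality at $p = 2$ and strictly for $p \geq 3$ by a one-line induction. Hence no even perfect number satisfies $a(n) > n$, so none is ample.

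The argument is essentially mechanical once the right formula is invoked, so the only real obstacle is recognising that (\ref{aexpcd}) with $d = 1$ already yields the needed closed form; the borderline case $p = 2$, which gives $n = 6$ and $a(6) = 6$, is merely consistent with the later observation that $6$ is pristine (and therefore not ample).
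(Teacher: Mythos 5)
Your proposal is correct and follows essentially the same route as the paper: both apply the Euclid--Euler form $2^{p-1}(2^p-1)$, use the closed form $a(2^c q)=2^c(c+2)$ from (\ref{aexpcd}) to get $a(n)=2^{p-1}(p+1)$, and reduce the claim to the inequality $p+2\le 2^p$ for $p\ge 2$. Your explicit handling of the boundary case $p=2$ (where $a(6)=6$, so $6$ is pristine rather than ample) is a small but welcome clarification of the equality case that the paper passes over.
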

\emph{Proof.}
All even perfect numbers are of the form $2^{p-1} (2^p-1)$, where $2^p-1$ are prime.
From (\ref{aexpcd}), $a(2^c q) = 2^c(2+c)$, so $a(2^{p-1} (2^p-1)) = 2^{p-1} (p + 1)$,		
and the condition that an even perfect number is ample is $p+2 > 2^p$, which is never satisfied for $p \geq 2$.
Since the first perfect number occurs at $p=2$, no even perfect number is ample. 	\qed
\begin{theorem}
	All ample numbers are abundant or odd perfect numbers (if they exist).
\end{theorem}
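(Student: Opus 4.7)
The plan is to reduce the claim immediately to the two preceding lemmas by invoking the standard trichotomy of natural numbers with respect to the divisor function. Every $n \geq 1$ is exactly one of deficient ($\sigma(n) < 2n$), perfect ($\sigma(n) = 2n$), or abundant ($\sigma(n) > 2n$), so to identify the ample numbers inside this trichotomy it suffices to eliminate the cases that are incompatible with $a(n) > n$.

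First I would note that the first preceding lemma shows no deficient number is ample, which removes the first class outright. This leaves only perfect and abundant numbers as candidates for being ample. I would then split the perfect case according to parity: an ample perfect number must be either even or odd. The second preceding lemma precisely rules out the even perfect numbers, using the closed form $a(2^{p-1}(2^p-1)) = 2^{p-1}(p+1)$ and the fact that $p+2 \leq 2^p$ for $p \geq 2$. Consequently, the only perfect numbers that could possibly be ample are odd perfect numbers (whose existence remains open, hence the qualification in the statement).

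Putting these two eliminations together, the complementary class consists exactly of the abundant numbers together with the odd perfect numbers, proving the theorem. There is no real obstacle: the result is a direct corollary of the two lemmas and the trichotomy of $\sigma$, and the theorem is essentially a bookkeeping statement that packages those two facts. The only subtlety worth flagging explicitly in the write-up is the parenthetical ``if they exist,'' which acknowledges that the odd-perfect branch of the conclusion may in fact be vacuous.
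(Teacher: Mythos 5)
Your proposal is correct and follows exactly the paper's own argument: invoke the deficient/perfect/abundant trichotomy, eliminate deficient numbers by the first lemma and even perfect numbers by the second, leaving only abundant and odd perfect numbers as possible ample numbers. The paper's proof is just a one-sentence version of the same reduction.
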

\emph{Proof.}
This follows from Lemmas 1 and 2: if no deficient or even perfect numbers are ample, then only the abundant and odd perfect numbers can be ample. \qed
\subsection{Odd ample numbers} 
Like with the abundant numbers, there are odd ample numbers.
At first the opposite seemed true.
I thought there would be no odd ample numbers because of the special role of 2 in closed form expressions of $a(n)$; see (\ref{aexpc}), (\ref{aexpcd}) and (\ref{aexpcde}), for example.
To my surprise, I found that odd ample numbers do exist and, prompted by the analogy with abundant numbers, also found such numbers not divisible by 3.
The smallest ample numbers of each type are 
\begin{eqnarray*}
4.3 \times 10^{11}	&\simeq&	3^9 \cdot 5^5 \cdot 7^2 \cdot 11 \cdot 13 				 \\
3.3 \times 10^{81}	&\simeq&	5^{22} \cdot 7^{13} \cdot 11^8 \cdot 13^6 \cdot 17^5 \cdot 19^4 \cdot 23^3 \cdot 29^2 \cdot 31^2 \cdot 37^2 \cdot 41 \cdot \ldots \cdot 73,
\end{eqnarray*}
which are considerably larger than their abundant counterparts shown above.
This leads to the following conjecture:
\begin{conjecture}
	There exist ample numbers not divisible by the first $k$ primes for all $k$.
\end{conjecture}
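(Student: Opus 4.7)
The plan is to adapt Iannucci's construction of abundant numbers avoiding the first $k$ primes \cite{Iannucci} to the recursive setting, with Theorem 1 playing the role of the multiplicativity of $\sigma$. For each $k \geq 1$ I would try to exhibit an ample $n_k = q_1^{c_1} \cdots q_m^{c_m}$ where $q_1 < q_2 < \cdots$ are the consecutive primes starting from $q_1 = p_{k+1}$ and the exponents decrease in $i$, matching the empirical pattern of the author's examples for $k=1,2$.

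The first step is to obtain a lower bound on $a(n_k)$ strong enough to exceed $n_k$. Applying Theorem 1 prime-by-prime gives only $a(n_k) \geq 2^{c_1 + \cdots + c_m}$, which can never exceed $n_k$ once all primes are odd. The required improvement must come from the slack in Theorem 1, which is strict whenever the factors share structure. I would decompose $n_k$ into overlapping sub-products in one, two, and three primes, evaluate each exactly via (\ref{aexpc})--(\ref{aexpcde}), recombine the pieces through the product rule, and then optimize the exponents subject to the constraint that the smallest prime is $p_{k+1}$.

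The main obstacle is that $a(n)/n$ is not multiplicative over coprime parts, so the Mertens-theorem calculus underlying Iannucci's argument is unavailable. A natural substitute is the quantity $L_m(k) := \sup_{c_1, \ldots, c_m \geq 1} a(q_1^{c_1}\cdots q_m^{c_m})/(q_1^{c_1}\cdots q_m^{c_m})$, and the conjecture would follow if one could prove $L_m(k) \to \infty$ as $m \to \infty$ for each fixed $k$, since then $L_m(k) > 1$ for some $m$. What makes this delicate is that, for primes $q_i \geq 3$, $a(n)/n \to 0$ whenever any single $c_i \to \infty$ (this follows from iterating the closed forms above), so the supremum is attained at a balanced, finite choice of exponents, foreclosing a naive limit argument. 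The principal technical difficulty, and where I expect any proof to live or die, is producing a combinatorial lower bound on $a$ for numbers with many distinct odd prime factors that is strictly sharper than what Theorem 1 alone provides; one promising avenue is a careful extension of (\ref{aexpcde}) to more primes, combined with an exponent optimization informed by the author's numerical examples and estimates drawn from the ordered-factorization interpretation of $a(n)$.
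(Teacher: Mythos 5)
This statement is a \emph{conjecture} in the paper, not a theorem: the author offers no proof, only the explicit witnesses for $k=1$ and $k=2$ (roughly $4.3\times 10^{11}$ and $3.3\times 10^{81}$) and the remark that one such number yields infinitely many via Corollary 1; it reappears as Question 2 in the list of open problems. So there is no proof of record to compare yours against, and your proposal---as you yourself signal---is a research programme rather than a proof. The concrete gap is that the only step that matters is the one you defer. Your negative observations are correct and genuinely diagnostic: since $a(q^c)=2^c$ by (\ref{aexpc}), iterating Theorem \ref{aproductrule} over single prime powers gives only $a(n)\geq 2^{\Omega(n)}$, which loses to $n\geq 3^{\Omega(n)}$ once $n$ is odd; $a(n)$ depends only on the exponent multiset while $n$ grows with the primes, so forcing the least prime up to $p_{k+1}$ strictly degrades $a(n)/n$, consistent with the jump from $10^{11}$ to $10^{81}$ between the author's two examples; and the ratio tends to $0$ when any single exponent is sent to infinity, so any witness must balance many primes against moderate exponents.

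The positive half, however, is absent. The reduction to showing $L_m(k)\to\infty$ is essentially a restatement of the conjecture, and the proposed mechanism---recombining ``overlapping'' sub-products through Theorem \ref{aproductrule}---is underspecified: that theorem lower-bounds $a(ln)$ for a genuine factorization $n_k=l\cdot n$, so overlapping pieces whose product exceeds $n_k$ give no bound on $a(n_k)$ at all, while legitimate splits of the exponents into blocks of at most three primes (the only cases with closed forms, (\ref{aexpc})--(\ref{aexpcde})) leave entirely open the quantitative question of whether the slack in those blocks can compound to exceed $n_k$ when every prime is at least $p_{k+1}$. Until you produce, for each $k$, an explicit exponent pattern together with a provable lower bound on $a$ for arbitrarily many distinct odd primes that beats the trivial $2^{\Omega(n)}$ by the required margin, the conjecture remains exactly as open as the paper leaves it.
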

\noindent
Note that if there is one ample number not divisible by the first $k$ primes, then there is an infinite number, since the product of two ample numbers is ample.
\begin{figure}[b!]
\includegraphics[width=0.85\textwidth]{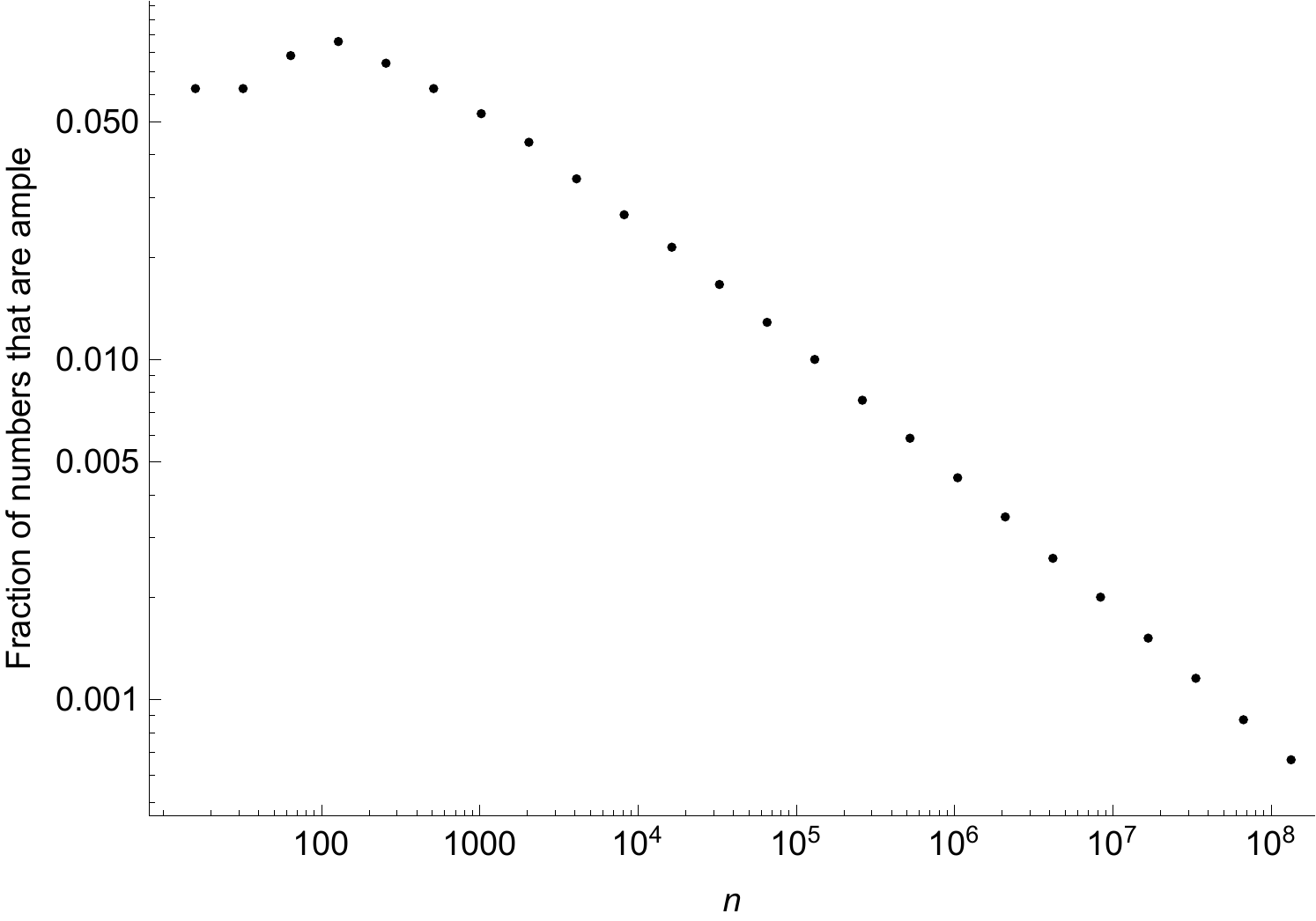}
\caption{\small
The fraction of numbers up to and including $n$ that are ample.
Unlike abundant numbers, which have natural density is between 0.2474 and 0.2480, the density of ample numbers decreases with $n$.
}
\vspace{-14pt}
\label{avalsplot}
\end{figure}
\section{Recursively perfect numbers}
\noindent
In this section I review perfect numbers and introduce recursively perfect numbers, which I call pristine numbers.
\subsection{Perfect numbers}
A number is perfect if it equals the sum of its proper divisors, that is,
\begin{equation*}
\sigma(n) - n = \sum_{m \divprop n} m = n.
\end{equation*}
The first several perfect numbers are $6, 28, 496, 8128, 33550336$.
Euclid showed that numbers of the form $2^{p-1}(2^p-1)$ are perfect for $2^p-1$ prime.
All known perfect numbers are even, and if an odd perfect number exists, it must be greater than $10^{1500}$ \cite{Ochem}.
Euler proved that all even perfect numbers are of the form given by Euclid, so there is a one-to-one correspondence between even perfect numbers and Mersenne primes. 
But it is not known if there are infinitely many of either.
\subsection{Pristine numbers}
Recursively perfect numbers are the recursive analog of perfect numbers.
I call them pristine numbers.
\begin{definition}
	A number $n$ is pristine if $a(n) = n$, where $a(n)$ is the number of recursive divisors.
	\label{pristinedef}
\end{definition}
\noindent
For example, 40 is pristine because $a(40) = 40$.
The first several pristine numbers are 1, 2, 4, 6, 8, 16, 32, 40, and the first 100 are shown in Table 2.
\begin{theorem}
There are no odd pristine numbers apart from 1.
\end{theorem}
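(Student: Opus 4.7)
The plan is to prove a stronger auxiliary lemma: $a(n)$ is even for every integer $n > 1$. The theorem then follows at once, because if an odd $n > 1$ were pristine then $a(n) = n$ would force the left side to be odd, contradicting the lemma; the value $n = 1$, which does satisfy $a(1) = 1$ by Definition~\ref{adef}, is the lone exception allowed by the statement.

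For the parity lemma I would use strong induction on $n$. The base case $n = 2$ gives $a(2) = 1 + a(1) = 2$, which is even. For the inductive step, fix $n > 2$ and suppose $a(m)$ is even for every $m$ with $1 < m < n$. From Definition~\ref{adef}, isolating the contribution of the proper divisor $m = 1$, I would write
\begin{equation*}
a(n) \;=\; 1 + \sum_{m \propdiv n} a(m) \;=\; 1 + a(1) + \sum_{\substack{m \propdiv n \\ m > 1}} a(m) \;=\; 2 + \sum_{\substack{m \propdiv n \\ m > 1}} a(m).
\end{equation*}
Each proper divisor of $n$ with $m > 1$ lies strictly between $1$ and $n$, so by the inductive hypothesis every term in the remaining sum is even. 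Hence $a(n)$ equals $2$ plus a sum of even numbers and is itself even, which closes the induction.

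There is no serious obstacle to speak of: the whole argument hinges on the purely arithmetic observation that the leading constant $1$ built into the recurrence pairs with $a(1) = 1$ to contribute an even amount of $2$, after which nothing depends on the factorization of $n$. In particular the induction never uses the parity of $n$ itself or the shape of its prime factorization, so it applies uniformly and delivers the theorem for every odd $n > 1$.
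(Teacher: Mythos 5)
Your proof is correct. The paper disposes of this theorem in one line by citing Corollary 1 of \cite{Fink}, which states that $2^{\tau}$ divides $a(n)$, where $\tau$ is the largest exponent in the prime factorization of $n$; since $\tau \geq 1$ for every $n > 1$, this forces $a(n)$ to be even and the theorem follows exactly as in your argument. You prove a weaker statement---merely that $2 \mid a(n)$ for $n > 1$---but you prove it from scratch with a clean strong induction, and the key observation that the leading $1$ in the recurrence pairs with the term $a(1) = 1$ to contribute $2$, while every proper divisor $m$ with $1 < m < n$ falls under the inductive hypothesis, is exactly right. What your route buys is self-containment: the reader needs nothing beyond Definition \ref{adef}. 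What the paper's route buys is the stronger divisibility fact $2^{\tau} \mid a(n)$, which is reused implicitly elsewhere (for instance, it explains why the closed forms such as $a(p^c) = 2^c$ and $a(p^c q) = 2^c(c+2)$ carry the factor $2^c$, and why pristine numbers are so heavily weighted toward large powers of $2$). Either argument is a complete and valid proof of the stated theorem.
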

\begin{proof}
From Corollary 1 in \cite{Fink}, $2^{\tau}$ divides $a(n)$, where $\tau$ is the maximum exponent in the prime factorization of $n$. 
\end{proof}
\begin{table}[b!]
\begin{small}
\[\arraycolsep=13pt
\begin{array}{llll}
1 & 2^7\cdot 3\cdot 13\cdot 31 & 2^{33} & 2^{49} \\
 2 & 2^{18} & 2^{29}\cdot 31 & 2^{25}\cdot 3\cdot 7\cdot 29\cdot 127\cdot 349 \\
 2^2 & 2^{19} & 2^{34} & 2^{39}\cdot 3\cdot 587 \\
 2\cdot 3 & 2^{15}\cdot 17 & 2^{25}\cdot 11\cdot 71 & 2^{50} \\
 2^3 & 2^{20} & 2^{35} & 2^{45}\cdot 47 \\
 2^4 & 2^{13}\cdot 11\cdot 23 & 2^{36} & 2^{51} \\
 2^5 & 2^{21} & 2^{37} & 2^{52} \\
 2^3\cdot 5 & 2^{17}\cdot 19 & 2^{19}\cdot 7\cdot 23\cdot 37\cdot 53 & 2^{53} \\
 2^6 & 2^{22} & 2^{38} & 2^{37}\cdot 3\cdot 7\cdot 3259 \\
 2^7 & 2^{15}\cdot 13^2 & 2^{39} & 2^{54} \\
 2^5\cdot 7 & 2^{23} & 2^{40} & 2^{55} \\
 2^8 & 2^{15}\cdot 3\cdot 107 & 2^{35}\cdot 37 & 2^{39}\cdot 13\cdot 59\cdot 103 \\
 2^3\cdot 3\cdot 11 & 2^{24} & 2^{41} & 2^{56} \\
 2^9 & 2^{25} & 2^{42} & 2^{51}\cdot 53 \\
 2^{10} & 2^{21}\cdot 23 & 2^{23}\cdot 3^4\cdot 11807 & 2^{57} \\
 2^{11} & 2^{26} & 2^{43} & 2^{58} \\
 2^{12} & 2^{27} & 2^{33}\cdot 3\cdot 431 & 2^{47}\cdot 11\cdot 227 \\
 2^9\cdot 11 & 2^{19}\cdot 13\cdot 37 & 2^{44} & 2^{59} \\
 2^{13} & 2^{28} & 2^{39}\cdot 41 & 2^{43}\cdot 3\cdot 29\cdot 1187 \\
 2^{14} & 2^{29} & 2^{45} & 2^{60} \\
 2^{11}\cdot 13 & 2^{30} & 2^{35}\cdot 11\cdot 131 & 2^{49}\cdot 37\cdot 73 \\
 2^{15} & 2^{21}\cdot 3\cdot 191 & 2^{46} & 2^{61} \\
 2^{16} & 2^{31} & 2^{41}\cdot 43 & 2^{62} \\
 2^9\cdot 3\cdot 47 & 2^{27}\cdot 29 & 2^{47} & 2^{51}\cdot 3\cdot 971 \\
 2^{17} & 2^{32} & 2^{48} & 2^{57}\cdot 59
\end{array}
\]
\\ \vspace{8pt}
\caption{
The first 100 recursively perfect numbers, which I call pristine numbers.
A number $n$ is pristine if $a(n) = n$, where $a(n)$ is the number of recursive divisors.
}
\end{small}
\label{PristineTable}
\end{table}
\begin{theorem}
Let $q, r$ and $s$ be odd primes.
All numbers of the following forms are pristine:
\begin{eqnarray*}
	2^c 																					,	\\
	2^c q 			&{\rm where}& 	q 				= 		c + 2								,	\\
	2^c q \, r , 			&{\rm where}& 	q \,	r 			= 		c^2 + 6c + 6						,	\\
	2^c q^2 , 			&{\rm where}& 	q^2	 			= 		(c^2 + 7c + 8) / 2!					,	\\
	2^c q \, r \, s, 		&{\rm where}& 	q \, 	r \, 	s 		= 		c^3 + 12 c^2 + 36 c + 26				,	\\
	2^c q^2 r, 			&{\rm where}& 	q^2 	r			= 		(c^3 + 13 c^2 + 42 c + 32) / 2!				.
\end{eqnarray*}
\end{theorem}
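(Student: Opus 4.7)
The plan is to compute $a(n)$ in closed form for each of the six shapes and equate the result to $n$, reading off the stated Diophantine conditions. Every computation will rest on the recursion $a(n) = 1 + \sum_{m\propdiv n} a(m)$ and on the three displayed identities (\ref{aexpc})--(\ref{aexpcde}).

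The first three shapes are direct. The case $n = 2^c$ is just (\ref{aexpc}). For $n = 2^c q$, specialising (\ref{aexpcd}) to $d = 1$ gives $a(n) = 2^c(c+2)$, so $a(n) = n$ is equivalent to $q = c+2$. For $n = 2^c q^2$, taking $d = 2$ collapses the binomial sum to $a(n) = 2^{c-1}(c^2 + 7c + 8)$, yielding $q^2 = (c^2 + 7c + 8)/2!$.

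For the three-prime shapes $n = 2^c q r$ and $n = 2^c q^2 r$ I would apply (\ref{aexpcde}) with $p = 2$, reducing each to a short alternating sum of two-prime values $a(2^k r)$. Since $a(2^k r) = 2^k(k+2)$ from the previous step, each sum telescopes to $2^c$ times a polynomial in $c$, and after cancelling the common factor of $2^c$ the pristine condition reads $qr = c^2 + 6c + 6$ in the first case and $q^2 r = (c^3 + 13c^2 + 42c + 32)/2!$ in the second.

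The genuinely new ingredient is the four-prime case $n = 2^c q r s$, which is not covered by any of (\ref{aexpc})--(\ref{aexpcde}). My plan is to extend (\ref{aexpcde}) by allowing an arbitrary coprime passive factor $M$ in place of $r^e$: the derivation in \cite{Chor} groups divisors according to the exponent of $q$ and applies inclusion--exclusion, and this regrouping is insensitive to whether the remaining factor is a prime power or a general coprime integer. Granting this, one obtains
\[
a(2^c q M) = (c+1)\,a(2^{c+1} M) - c\,a(2^c M)
\]
whenever $M$ is coprime to $2q$; taking $M = rs$ and substituting the formula $a(2^k r s) = 2^k(k^2 + 6k + 6)$ already derived in the previous paragraph gives $a(n) = 2^c(c^3 + 12c^2 + 36c + 26)$, and hence the pristine condition $qrs = c^3 + 12c^2 + 36c + 26$. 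The main obstacle is justifying this four-prime reduction rigorously; once that identity is secured, the remainder of the proof is routine polynomial bookkeeping, and in each of the six cases $a(n) = n$ cancels the common $2^c$ to produce exactly the Diophantine equation stated in the theorem.
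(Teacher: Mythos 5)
Your proposal follows essentially the same route as the paper: compute $a(n)$ in closed form for each shape from (\ref{aexpc})--(\ref{aexpcde}), set $a(n)=n$ with $p=2$, and cancel the common factor $2^c$; the paper is no more explicit than you are on the four-prime case, saying only that ``some manipulation using the definition of $a(n)$ is required.'' The reduction you flag as the main obstacle, $a(2^c q M)=(c+1)\,a(2^{c+1}M)-c\,a(2^c M)$ for $M$ coprime to $2q$, is in fact correct and closes without difficulty: since $a(n)=2H(n)$ for $n>1$ with $H(n)=\sum_{\ell\ge 1}\sum_{i=0}^{\ell}(-1)^i\binom{\ell}{i}\prod_j\binom{a_j+\ell-i-1}{a_j}$ a linear functional applied to the product of the per-prime polynomials $\binom{a_j+m-1}{a_j}$ in $m$, the polynomial identity $m\binom{m+c-1}{c}=(c+1)\binom{m+c}{c+1}-c\binom{m+c-1}{c}$ merges the factors for $2^c$ and $q$ while leaving the factor for $M$ untouched, which is exactly the generalization you need.
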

\emph{Proof.}
Using (\ref{aexpc}), (\ref{aexpcd}) and (\ref{aexpcde}), $a(n)$ can be written explicitly for simple forms of $n$:
\begin{eqnarray*}
	a(p^c) 				&=& 		2^c 									,	\\
	a(p^c q) 				&=& 		2^c 	(c + 2)							,	\\
	a(p^c q \, r) 			&=& 		2^c 	(c^2 + 6c + 6)						,	\\
	a(p^c q^2) 			&=& 		2^c 	(c^2 + 7c + 8) / 2!					,	\\
	a(p^c q \, r \, s) 			&=& 		2^c 	(c^3 + 12 c^2 + 36 c + 26)				,	\\
	a(p^c q^2 r) 			&=& 		2^c 	(c^3 + 13 c^2 + 42 c + 32) / 2!,				
\end{eqnarray*}
where for the case of $a(p^c q \, r \, s)$ some manipulation using the definition of $a(n)$ is required.
For $n$ to be pristine, $a(n) = n$.
Setting $p=2$, the theorem follows. \qed
\\ \indent
The first several values of pristine numbers of these forms are shown in Table 3.
\begin{theorem}
Let $q$ be prime.
No numbers of the form $2^c q^3$ or $2^c q^5$ can be pristine. 
\end{theorem}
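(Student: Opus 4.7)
My plan is to convert each of $d = 3$ and $d = 5$ into an explicit Diophantine equation in $c$ and $q$, factor it, and eliminate cases via coprimality.

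First, expanding the closed form $(\ref{aexpcd})$ with $p = 2$, the pristine condition $a(2^c q^d) = 2^c q^d$ reduces (after cancelling $2^c$) to
\begin{equation*}
6 q^3 \; = \; c^3 + 15 c^2 + 56 c + 48 \qquad (d = 3),
\end{equation*}
and
\begin{equation*}
120 q^5 \; = \; c^5 + 40 c^4 + 535 c^3 + 2900 c^2 + 6124 c + 3840 \qquad (d = 5).
\end{equation*}
A quick check shows $c = -4$ is a root of the cubic and $c = -6$ is a root of the quintic, so the right-hand sides factor as $(c+4)(c^2 + 11 c + 12)$ and $(c+6)(c^4 + 34 c^3 + 331 c^2 + 914 c + 640)$ respectively.

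Next, I would bound the gcd of the two polynomial factors in each factorization. The identity $c^2 + 11c + 12 = (c+4)(c+7) - 16$ shows the gcd divides $16$ in the $d = 3$ case, and evaluating the quartic $R(c)$ at $-6$ gives $R(-6) = 1024$, so in the $d = 5$ case the gcd divides $2^{10}$. Combined with the fact that $q$ odd forces $\nu_2(6 q^3) = 1$ and $\nu_2(120 q^5) = 3$, a short $2$-adic computation pins down the parity of $c$ and makes the two polynomial factors coprime after the appropriate power of $2$ has been extracted.

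With coprimality in place, the prime power $q^d$ must split entirely between the two factors: writing the equation as $A \cdot B = c_d \cdot q^d$ with $\gcd(A,B) = 1$ and $c_d \in \{3, 15\}$, the possible allocations are $\{A, B\} = \{\alpha q^i, \beta q^{d-i}\}$ with $\alpha \beta = c_d$ and $0 \le i \le d$. Substituting $A = c + 4$ or $A = c + 6$ in each subcase produces a polynomial equation in $q$ alone, of degree at most $2i$ for $d = 3$ and $4i$ for $d = 5$. A finite check---discriminants for the low-degree cases, a direct growth bound for the high-degree cases---shows each subcase forces either $c < 0$, $q \notin \mathbb{Z}$, or $q \in \{1, 2\}$, all incompatible with $q$ being an odd prime.

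The main obstacle is the combinatorial bookkeeping for $d = 5$: there are more allocations $(i, \alpha, \beta)$ to enumerate than for $d = 3$, and the polynomial equations that arise in $q$ have higher degree. Nonetheless the strategy mirrors the $d = 3$ argument exactly, and each equation that appears is straightforward to dispatch once it has been written down.
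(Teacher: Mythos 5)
Your proposal is correct, and it starts from the same place as the paper: the closed form $a(2^c q^d) = 2^c \sum_{i=0}^d \binom{d}{i}\binom{c+i}{i}$, the resulting Diophantine equations $6q^3 = (c+4)(c^2+11c+12)$ and $120\,q^5 = (c+6)(c^4+34c^3+331c^2+914c+640)$ (both of which I have verified), and the idea that $q^d$ must be distributed between the two polynomial factors. Where you genuinely diverge is in how that distribution is controlled. The paper clears the $3!$ by splitting on $c \bmod 3$, asserts that the linear and quadratic factors must then equal $q$ and $q^2$, and rules this out by noting that the quadratic factor is not the square of the linear one as a polynomial, so they can agree at most twice and by inspection do not; the $d=5$ case is dismissed as ``similar.'' You instead prove the two factors are coprime---via the gcd bounds $\gcd(c+4,\,c^2+11c+12)\mid 16$ and $\gcd(c+6,\,R(c))\mid R(-6)=2^{10}$, combined with the $2$-adic constraints $\nu_2=1$ (resp.\ $3$), which force $c$ odd---and then enumerate the finitely many allocations of $q^d$ and the constant between the coprime factors. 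Your route buys rigor that the paper's argument lacks: it explains why the allocation cannot be, say, $q^3\cdot 1$ or share a factor of $q$ between the two pieces, and it handles $d=5$ on the same footing rather than omitting it. I checked the steps you defer: $c$ even is excluded in both cases (for $d=3$ one even gets $c\equiv 3 \pmod 4$), the factors are then coprime, and each allocation fails (for $d=3$: $c+4\in\{1,3\}$ gives $c<0$; $c+4=q^3$ forces $c^2+11c+6=0$ with non-square discriminant $97$; $c+4=3q^3$ gives $c\in\{-1,-10\}$). One simplification you should note: once coprimality is in hand, $q$ divides at most one factor, so only the allocations $i=0$ and $i=d$ survive; the combinatorial bookkeeping for $d=5$ that you flag as the main obstacle reduces to a handful of cases and is not a real difficulty.
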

\emph{Proof.}
For $n$ to be pristine, $a(n) = n$.
From (2),
$2^c q^d$ is pristine when $c$, $d$ and prime $q$ satisfy
\begin{equation}
	q^d 		= 		\sum_{i=0}^d \binom{d}{i} \binom{c+i}{i}.		
	\label{dqsum}			
\end{equation}
\begin{table}[b!]
\begin{small}
\begin{tabular*}{\textwidth}{@{\extracolsep{\fill}}llll}
$2^c$		& $2^c \cdot q$ 				& $2^c \cdot q \cdot r$  				& $2^c 	\cdot q \cdot r \cdot s$  			  	\\
 $2$ 			& $2 \cdot 3$ 					& $2^3 \cdot 3 \cdot 11$ 			 	& $2^7 	\cdot 3 \cdot 13 \cdot 31$				\\
 $2^2$ 		& $2^3 \cdot 5$  				& $2^9 \cdot 3 \cdot 47$				& $2^{37}	\cdot 3 \cdot 7 \cdot 3259$			\\
 $2^3$ 		& $2^5 \cdot 7$  				& $2^{13} \cdot 11 \cdot 23$			& $2^{39}	\cdot 13 \cdot 59 \cdot 103$			\\
 $2^4$ 		& $2^9 \cdot 11$  				& $2^{15} \cdot 3 \cdot 107$		 	& $2^{43} 	\cdot 3 \cdot 29 \cdot 1187$			\\
 $2^5$ 		& $2^{11} \cdot 13$  				& $2^{19} \cdot 13 \cdot 37$			& $2^{57}	\cdot 11 \cdot 67 \cdot 307$			\\
 $2^6$ 		& $2^{15} \cdot 17$  				& $2^{21} \cdot 3 \cdot 191$			& $2^{59}	\cdot 13 \cdot 127 \cdot 151$			\\
 $2^7$ 		& $2^{17} \cdot 19$  				& $2^{25} \cdot 11 \cdot 71$ 			& $2^{61} \cdot 3 \cdot 5 \cdot 18257$			\\
 $2^8$ 		& $2^{21} \cdot 23$  				& $2^{33} \cdot 3 \cdot 431$ 			& $2^{67} \cdot 3 \cdot 41 \cdot 2903$			\\
 $2^9$ 		& $2^{27} \cdot 29$  				& $2^{35} \cdot 11 \cdot 131$ 			& $2^{69} \cdot 19 \cdot 31 \cdot 659$			\\
 $2^{10}$ 		& $2^{29} \cdot 31$  				& $2^{39} \cdot 3 \cdot 587$			& $2^{71}	\cdot 5 \cdot 269 \cdot 313$			\\
 $2^{11}$ 		& $2^{35} \cdot 37$  				& $2^{47} \cdot 11 \cdot 227$			& $2^{73} \cdot 3 \cdot 29 \cdot 5237$ 			\\
 $2^{12}$ 		& $2^{39} \cdot 41$  				& $2^{49} \cdot 37 \cdot 73$			& $2^{75} \cdot 29 \cdot 71 \cdot 239$ 			\\
 \\
& $2^c \cdot q^2$  					& $2^c \cdot q^2 \cdot r$  					& $2^c 	\cdot q^3 	$		\\ 
& $2^{15} \cdot 13^2$ 		 		& $2^{167} \cdot 11^2 \cdot 20773$ 			& {\rm Numbers of this}		\\
& $2^{63} \cdot 47^2$		 		& $2^{419} \cdot 11^2 \cdot 313471$ 		& {\rm form cannot be}		\\
& $2^{623} \cdot 443^2$ 				& $2^{2587} \cdot 11^2 \cdot 71904083$ 		& {\rm pristine}				\\
& $2^{2255} \cdot 1597^2$	 		& $2^{2879} \cdot 47^2 \cdot 5425729$		& 						\\
& $2^{76719} \cdot 54251^2$			& $2^{3031} \cdot 11^2 \cdot 115558829$		& 						\\
& $2^{722975} \cdot 511223^2$		& $2^{3999} \cdot 11^2 \cdot 265124281$		& 	
\end{tabular*}
\vspace{0.1in}
\caption{
The first several pristine numbers for various forms of $n$.
A number $n$ is pristine if $a(n) = n$, where $a(n)$ is the number of recursive divisors.
}
\end{small}
\label{PristineSortedTable}
\end{table}
\\ \indent
For $d=3$, the right side of (\ref{dqsum}) is a cubic polynomial in $c$ that factors:
\begin{equation*}
q^3 = (c+4) (c^2 + 11 c + 12) / 3!.
\end{equation*}
But we do not know which part of 3! divides the linear and quadratic factors.
Let us consider separately the cases for when $c$ modulo 3 is equal to 0, 1 and 2: 
$$
q^3 =
\begin{cases}
(3 v+4) 	\cdot (3 v^2+11 v+4) / 2!	& \text{for } c = 3 v,		\\
(3 v+5)	\cdot (3 v^2+13 v+8)	/ 2!	& \text{for } c = 3 v + 1,	\\
(v+2) 	\cdot (9 v^2+45 v+38)/2!	& \text{for } c = 3 v + 2.	
\end{cases}
$$
Now the linear and quadratic factors are both integers.
For $2^c p^3$ to be pristine, the linear and quadratic factors must equal $q$ and $q^2$.
But the quadratic factors are not squares of the linear factors, so this is not possible in general.
It remains to check whether the quadratic and the square of the linear terms intersect at positive integers, which can happen at most twice.
By inspection, they do not. 
\\ \indent
For $d=5$, the right side of (\ref{dqsum}) is a polynomial of degree 5 in $c$ that factors:
\begin{equation*}
q^5 = (c+6) (c^4+34 c^3+331 c^2+914 c+640)/5!.
\end{equation*}
By similar arguments, not included here, $2^c p^5$ cannot be pristine.  	\qed
\begin{theorem}
Let $q$ be prime.
At most $d-1$ numbers of the form $2^c q^d$ are pristine for $d$ odd. 
\end{theorem}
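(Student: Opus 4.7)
The plan is to generalize the proof of Theorem~3 to all odd $d$ by exhibiting a uniform linear factor $(c+d+1)$ of the polynomial $P_d(c) := \sum_{i=0}^d \binom{d}{i}\binom{c+i}{i}$, then reducing the pristine condition to a single polynomial equation of degree $d-1$.

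First I would establish the factorization. Evaluating $P_d(-(d+1))$ via the identity $\binom{i - d - 1}{i} = (-1)^i \binom{d}{i}$ gives
\[
P_d(-(d+1)) = \sum_{i=0}^d (-1)^i \binom{d}{i}^2,
\]
which is the coefficient of $x^d$ in $(1-x)^d(1+x)^d = (1-x^2)^d$ and so vanishes for $d$ odd. Hence $(c+d+1) \mid d! \, P_d(c)$, yielding $P_d(c) = (c+d+1)\,Q_d(c)/d!$ with $Q_d$ monic of degree $d-1$ (by comparing leading coefficients). The factorizations used for $d = 3$ and $d = 5$ in Theorem~3 are the first two instances.

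The pristine condition $P_d(c) = q^d$ now reads $d!\,q^d = (c+d+1)\,Q_d(c)$. For $q$ an odd prime with $q > d$, I would write $c+d+1 = e\,q^j$ with $e \mid d!$ and $0 \leq j \leq d$. The asymptotic $c \sim (d!)^{1/d}\,q$ forces $j = 1$ for all but finitely many $q$: $j \geq 2$ makes $q^{j-1}$ bounded, and $j = 0$ bounds $c$. In the generic case $j = 1$, substituting $q = (c+d+1)/e$ into $e\,Q_d(c) = d!\,q^{d-1}$ yields
\[
e^d\,Q_d(c) = d!\,(c+d+1)^{d-1}.
\]
Both sides are monic polynomials in $c$ of degree $d-1$ with leading coefficients $e^d$ and $d!$; since $d!$ is not a $d$-th power for $d \geq 2$ (equivalently, $(d!)^{1/d}$ is irrational), the difference has degree exactly $d-1$, giving at most $d-1$ integer roots.

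The main obstacle will be controlling the dependence on $e$: a priori each divisor $e$ of $d!$ gives its own polynomial with up to $d-1$ roots, so a naive count overshoots the claimed bound. The saving observation is that $e = (c+d+1)/q$ converges to the irrational $(d!)^{1/d}$ as $q \to \infty$, pinning $e$ to a single integer value (the nearest to $(d!)^{1/d}$) for all sufficiently large $q$; hence only one polynomial equation of degree $d-1$ governs the asymptotic solutions, and the residual small-$q$ or $j \neq 1$ exceptions must be shown to either coincide with its roots or be ruled out by primality of $q$, in the same spirit as the $d = 3, 5$ arguments of Theorem~3 where every candidate root failed to yield a prime.
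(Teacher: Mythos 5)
Your proposal follows essentially the same route as the paper's proof: you extract the linear factor $(c+d+1)$ from $\sum_{i=0}^d \binom{d}{i}\binom{c+i}{i}$ via the vanishing of $\sum_{i=0}^d(-1)^i\binom{d}{i}^2$ for odd $d$, reduce the pristine condition to an equality of two polynomials of degree $d-1$ whose leading coefficients can only agree if $(d!)^{1/d}$ were an integer, and conclude by counting roots. The paper does exactly this, writing $c+d+1=\alpha q$ with $\alpha\mid d!$ (your $e$) and comparing $(c+d+1)^{d-1}/\alpha^{d-1}$ with $P(c)/\beta$.

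The obstacle you flag at the end is a genuine gap, and you should be aware that the paper does not close it either: it silently treats $\alpha$ as though it were the same for every solution $c$, whereas it can vary with $c$. Your proposed repair does not work as stated. The observation that $e=(c+d+1)/q$ would have to converge to the irrational $(d!)^{1/d}$ along an infinite family of solutions only shows that each fixed $e$ admits finitely many solutions --- in fact at most $d-1$, which you already get from the polynomial comparison --- but it does not prevent distinct divisors $e$ of $d!$ from each contributing up to $d-1$ solutions at moderate $q$, nor does it dispose of the cases $j\neq 1$ or $q\leq d$. What the argument actually yields is a bound of roughly $(d-1)$ times the number of admissible cofactors $e\mid d!$, not $d-1$. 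So your proposal, as written, proves a weaker statement than the theorem claims; read strictly, so does the paper's own proof, and your writeup at least has the virtue of making the missing step explicit rather than passing over it.
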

\emph{Proof.}
It may well be that there are no pristine numbers of the form $2^c q^d$ for $d$ odd.
But I can only prove the weaker result.
Using the falling factorial notation, where $(c)_{i} = c (c-1)\ldots (c-i+1)$, (\ref{dqsum}) can be rewritten as
\begin{equation}
	q^d		= 		\frac{1}{d!} \sum_{i=0}^d (d-i)! \binom{d}{i}^2 (c+i)_{i}.
	\label{TA}
\end{equation}
Making use of the identity
\begin{equation*}
	\sum_{i=0}^d (-1)^i \binom{d}{i}^2 = 
	\begin{cases}
	0,			 						& d \quad {\rm odd},						\\
	(-1)^{(d/2)} \binom{d}{d/2}, 				& d \quad {\rm even},
	\end{cases}
\end{equation*}
for $d$ odd (\ref{TA}) can be expressed as 
\begin{eqnarray*}		
	q^d		&=& 		\frac{c+d+1}{d!} \sum_{i=1}^d (d-i)!   (c+i-1)_{i-1}	\sum_{j=i}^d (-1)^{j-i}	 	\binom{d}{j}^2		\\
	 		&=& 		\frac{c+d+1}{d!} P(c),					
\end{eqnarray*}
where $P(c)$ is a polynomial in $c$.
Let $d! = \alpha \beta$, where $\alpha$ is the largest proper divisor of $c+d+1$.
For $2^c q^d$ to be pristine, it is necessary that $q = (c+d+1)/\alpha$, and therefore
\begin{equation*}
\frac{(c+d+1)^{d-1}}{\alpha^{d-1}} = \frac{P(c)}{\beta}.
\end{equation*}
The numerators on both sides are polynomials in $c$ of degree $d-1$ in which the coefficient for the leading term is one.
For the equality to hold in general, we need $\alpha^{d-1} = \beta$, or $\alpha = (d!)^{1/d}$. 
But since $(d!)^{1/d}$ is not an integer for $d>1$, it does not hold in general.
Since two different polynomials (now the fractions, not just the numerators) of degree $d-1$ can match at at most $d-1$ places, there can be at most $d-1$ pristine numbers of the form $2^c q^d$ for odd $d$. \qed
\section{Open questions}
\noindent
Numbers can be sorted into three pots depending on whether they exceed, equal or are less than the sum of their proper divisors.
In a similar way, they can also be sorted on whether they exceed, equal or are less than the number of their recursive divisors.
The properties and structure of these two sets of pots have some intriguing parallels.
This correspondence suggests the possibility of deeper connections between the divisor function and its recursive analog, which I hope others might pursue.
\subsection{Open questions}
There are several open questions on this topic, and I list six here.
The first three concern ample numbers and the last three concern pristine numbers.
\begin{question}
	What is the asymptotic density of the ample numbers for large $n$?
\end{question}
\begin{question}
	I computed the least ample number that is odd and the least ample number not divisible by the first two primes.
	Do there exist ample numbers not divisible by the first $k$ primes, for all $k$?
\end{question}
\begin{question}
	If so, what is an efficient recipe for generating the smallest number not divisible by the first $k$ primes?
\end{question}
\begin{question}
	What is the asymptotic density of the pristine numbers for large $n$?
\end{question}
\begin{question}
	There are no pristine numbers of the form $2^c p^3$ and $2^c p^5$. 
	Are there none for $2^c p^d$, for odd $d$?
	I have not found any, and have shown that there are at most $d-1$.
\end{question}
\begin{question}
	Are there no pristine numbers of the form $2^c p^d$ for even $d>2$? 
	I have not found any.
\end{question}
I acknowledge Andriy Fedosyeyev for assistance with the search for odd ample numbers and for creating the divisor tree generator, lims.ac.uk/recursively-divisible-numbers. 

\end{document}